\newtheorem{theorem}{Theorem}[section]
\newtheorem{lemma}[theorem]{Lemma}
\newtheorem{remark}[theorem]{Remark}
\title{Some Modular Considerations Regarding Odd Perfect Numbers}
\author{Jose Arnaldo Bebita Dris, Immanuel Tobias San Diego}
\begin{document}

\maketitle

\begin{abstract}
Let $p^k m^2$ be an odd perfect number with special prime $p$.  In this article, we provide an alternative proof for the biconditional that $\sigma(m^2) \equiv 1 \pmod 4$ holds if and only if $p \equiv k \pmod 8$.  We then give an application of this result to the case when $\sigma(m^2)/p^k$ is a square.
\end{abstract}
\noindent
{\bf Keywords:} Sum of divisors, Sum of aliquot divisors, Deficiency, Odd perfect number, Special prime. \\
{\bf 2010 Mathematics Subject Classification:} 11A05, 11A25.

\section{Introduction}

Let $\sigma(z)$ denote the sum of the divisors of $z \in \mathbb{N}$, the set of positive integers.  Denote the deficiency \cite{Sloane} of $z$ by $D(z)=2z-\sigma(z)$, and the sum of the aliquot divisors \cite{SloaneGuy} of $z$ by $s(z)=\sigma(z)-z$.  Note that we have the identity $D(z) + s(z) = z$.

If $n$ is odd and $\sigma(n)=2n$, then $n$ is said to be an odd perfect number \cite{Wikipedia}.  Euler proved that an odd perfect number, if one exists, must have the form $n = p^k m^2$, where $p$ is the special prime satisfying $p \equiv k \equiv 1 \pmod 4$ and $\gcd(p,m)=1$.

Chen and Luo \cite{ChenLuo} gave a characterization of the forms of odd perfect numbers $n = p^k m^2$ such that $p \equiv k \pmod 8$.  Starni \cite{Starni} proved that there is no odd perfect number decomposable into primes all of the type $\equiv 1 \pmod 4$ if $n = p^k m^2$ and $p \not\equiv k \pmod 8$.  Starni used a congruence from Ewell \cite{Ewell} to prove this result.

Note that, in general, since $m^2$ is a square, we get
$$\sigma(m^2) \equiv 1 \pmod 2.$$

This paper provides an alternative proof for Theorem 3.3, equation 3.1 in Chen and Luo's article titled ``Odd multiperfect numbers" \cite{ChenLuo}:
\begin{theorem}\label{ChenLuo}
Let $n = {{\pi}^{\alpha}}{M^2}$ be an odd $2$-perfect number, with $\pi$ prime, $\gcd(\pi,M)=1$ and $\pi \equiv \alpha \equiv 1 \pmod 4$.  Then
$$\sigma(M^2) \equiv 1 \pmod 4 \iff \pi \equiv \alpha \pmod 8.$$
\end{theorem}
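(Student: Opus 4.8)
The plan is to work directly from the perfection equation and reduce everything modulo small powers of $2$, avoiding the congruence of Ewell used by Starni. Since $n=p^k m^2$ is perfect and $\gcd(p,m)=1$, multiplicativity of $\sigma$ gives $\sigma(p^k)\,\sigma(m^2)=2p^k m^2$. The first move is to pin down $\sigma(p^k)$ modulo $4$: because $p\equiv 1\pmod 4$, every power satisfies $p^i\equiv 1\pmod 4$, so $\sigma(p^k)=\sum_{i=0}^{k}p^i\equiv k+1\pmod 4$, and $k\equiv 1\pmod 4$ forces $\sigma(p^k)\equiv 2\pmod 4$. Hence I may write $\sigma(p^k)=2u$ with $u$ odd, and cancel the factor $2$ to obtain the relation $u\,\sigma(m^2)=p^k m^2$ among odd integers.

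Next I would reduce this odd relation modulo $4$. For odd $m$ one has $m^2\equiv 1\pmod 8$ (a fortiori mod $4$), and $p\equiv 1\pmod 4$ gives $p^k\equiv 1\pmod 4$, so $u\,\sigma(m^2)\equiv 1\pmod 4$. For odd integers $a,b$ the product $ab\equiv 1\pmod 4$ holds exactly when $a\equiv b\pmod 4$, so this yields $u\equiv\sigma(m^2)\pmod 4$. In particular $\sigma(m^2)\equiv 1\pmod 4$ if and only if $u\equiv 1\pmod 4$, i.e.\ if and only if $\sigma(p^k)\equiv 2\pmod 8$. This step is the crux of the argument: it converts the target statement about $\sigma(m^2)\bmod 4$ into a purely local condition on $\sigma(p^k)\bmod 8$, and I expect it to be the main obstacle, since it is where perfection is really used and where one must be careful that the cancelled cofactor $u$ is genuinely odd.

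It then remains to evaluate $\sigma(p^k)\pmod 8$ and compare with the condition $p\equiv k\pmod 8$. Here I split on $p\bmod 8\in\{1,5\}$ (the only residues compatible with $p\equiv 1\pmod 4$). If $p\equiv 1\pmod 8$ then $p^i\equiv 1\pmod 8$ for all $i$, so $\sigma(p^k)\equiv k+1\pmod 8$, whence $\sigma(p^k)\equiv 2\pmod 8\iff k\equiv 1\pmod 8\iff p\equiv k\pmod 8$. If $p\equiv 5\pmod 8$ then $p^2\equiv 1\pmod 8$, and since $k$ is odd the $k+1$ terms pair off into equal counts of $1$'s and $5$'s, giving $\sigma(p^k)\equiv 3(k+1)\pmod 8$; solving $3(k+1)\equiv 2\pmod 8$ yields $k\equiv 5\pmod 8$, again exactly $p\equiv k\pmod 8$. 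Combining the two cases shows $\sigma(p^k)\equiv 2\pmod 8\iff p\equiv k\pmod 8$, which together with the previous paragraph completes the biconditional. The remaining work is just the residue bookkeeping modulo $8$, which is routine and which I would not dwell on.
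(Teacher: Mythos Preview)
Your argument is correct and noticeably more direct than the paper's. Both routes share the computation of $\sigma(p^k)\pmod 8$ (the paper records this as its Lemma~\ref{Lemma1}), but they diverge in how that information is linked to $\sigma(m^2)\pmod 4$. You cancel the single factor of $2$ from the perfection equation to obtain the odd identity $u\,\sigma(m^2)=p^k m^2$ with $u=\sigma(p^k)/2$, and then a one-line reduction modulo $4$ gives $\sigma(m^2)\equiv u\pmod 4$, so the whole question collapses to whether $\sigma(p^k)\equiv 2\pmod 8$. The paper instead passes through the auxiliary quantities $D(z)=2z-\sigma(z)$ and $s(z)=\sigma(z)-z$, derives the identity $2D(m^2)s(m^2)=\bigl(\gcd(m^2,\sigma(m^2))\bigr)^2 D(p^k)s(p^k)$, tabulates $D(p^k),s(p^k)\pmod 8$ and $D(m^2),s(m^2)\pmod 4$ in four preparatory lemmas, and then runs a four-case contradiction argument comparing $2$-adic content on both sides. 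Your approach buys brevity and transparency: it isolates exactly where perfection enters and avoids the deficiency/aliquot machinery altogether. The paper's approach, while longer, is set up with an eye toward pushing the same framework to $\sigma(m^2)\pmod 8$ (as the authors note in their conclusion), for which the identity $(*)$ may carry more information than the bare relation you use.
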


The method presented in this paper may potentially be used to extend the arguments to consider $\sigma(m^2)$ modulo $8$.

\section{Preliminaries}

Starting from the fundamental equality
$$\frac{\sigma(m^2)}{p^k} = \frac{2m^2}{\sigma(p^k)}$$
(which follows from the facts that $\sigma(n)=2n$, $\sigma$ is multiplicative, and $\gcd(p^k,\sigma(p^k))=1$) one can derive
$$\frac{\sigma(m^2)}{p^k} = \frac{2m^2}{\sigma(p^k)} = \gcd(m^2, \sigma(m^2))$$
so that we ultimately have
$$\frac{D(m^2)}{s(p^k)} = \frac{2m^2 - \sigma(m^2)}{\sigma(p^k) - p^k} = \gcd(m^2, \sigma(m^2))$$
and
$$\frac{s(m^2)}{D(p^k)/2} = \frac{\sigma(m^2) - m^2}{p^k - \frac{\sigma(p^k)}{2}} = \gcd(m^2, \sigma(m^2)),$$
whereby we obtain
$$\frac{D(p^k)D(m^2)}{s(p^k)s(m^2)} = 2.$$
Note that we also have the following equation:
$$\frac{2D(m^2)s(m^2)}{D(p^k)s(p^k)} = \bigg(\gcd(m^2, \sigma(m^2))\bigg)^2. \hspace{1.0in} (*)$$ 
Lastly, notice that we can easily get
$$\sigma(p^k) \equiv k + 1 \equiv 2 \pmod 4$$
(since $p \equiv k \equiv 1 \pmod 4$) so that it remains to consider the possible equivalence classes for $\sigma(m^2)$ modulo $4$.  Since $\sigma(m^2)$ is odd, we only need to consider two.

We ask:  Which equivalence class of $\sigma(m^2)$ modulo $4$ makes Equation $(*)$ untenable?

\section{Discussion and Results}

We know that the answer to the question we posed in the previous section must somehow depend on the equivalence class of $p$ and $k$ modulo $8$, but as we only know that $p \equiv k \equiv 1 \pmod 4$, we need to consider the following cases separately and thereby prove the corresponding results:

\begin{remark}\label{MainTheoremRemark}
Suppose that $n = p^k m^2$ is an odd perfect number with special prime $p$. We claim the truth of the following propositions, which we will need to treat separately later:
\begin{enumerate}
\item{If $p \equiv k \equiv 1 \pmod 8$, then $\sigma(m^2) \equiv 3 \pmod 4$ is impossible.}
\item{If $p \equiv 1 \pmod 8$ and $k \equiv 5 \pmod 8$, then $\sigma(m^2) \equiv 1 \pmod 4$ is impossible.}
\item{If $p \equiv 5 \pmod 8$ and $k \equiv 1 \pmod 8$, then $\sigma(m^2) \equiv 1 \pmod 4$ is impossible.}
\item{If $p \equiv k \equiv 5 \pmod 8$, then $\sigma(m^2) \equiv 3 \pmod 4$ is impossible.}
\end{enumerate}
\end{remark}

First, we prove the following lemmas:

\begin{lemma}\label{Lemma1}
Suppose that $n = p^k m^2$ is an odd perfect number with special prime $p$.
\begin{enumerate}
\item{If $p \equiv 1 \pmod 8$, then $\sigma(p^k) \equiv k + 1 \pmod 8$.}
\item{If $p \equiv 5 \pmod 8$ and $k \equiv 1 \pmod 8$, then $\sigma(p^k) \equiv 6 \pmod 8$.}
\item{If $p \equiv 5 \pmod 8$ and $k \equiv 5 \pmod 8$, then $\sigma(p^k) \equiv 2 \pmod 8$.}
\end{enumerate}
\end{lemma}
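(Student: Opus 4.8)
The plan is to work directly with the expansion $\sigma(p^k) = \sum_{i=0}^{k} p^i$ and reduce each power of $p$ modulo $8$. Since $p$ is the special prime we have $p \equiv 1 \pmod 4$, so modulo $8$ the prime $p$ is congruent either to $1$ or to $5$; these two residues are exactly what separate part (1) from parts (2) and (3). The only arithmetic input needed is the behavior of the powers $p^i$ modulo $8$ in each regime, which follows from computing the multiplicative order of $p$ modulo $8$.

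For part (1), if $p \equiv 1 \pmod 8$ then $p^i \equiv 1 \pmod 8$ for every $i \ge 0$, so
$$\sigma(p^k) = \sum_{i=0}^{k} p^i \equiv \sum_{i=0}^{k} 1 = k + 1 \pmod 8,$$
which is the claimed congruence with no case analysis needed. For parts (2) and (3), if $p \equiv 5 \pmod 8$ then $5^2 = 25 \equiv 1 \pmod 8$, so the powers alternate: $p^i \equiv 1 \pmod 8$ when $i$ is even and $p^i \equiv 5 \pmod 8$ when $i$ is odd. Here I use that $k \equiv 1 \pmod 4$ forces $k$ to be odd, so the sum $\sum_{i=0}^{k} p^i$ has an even number $k+1$ of terms that pair off into $\tfrac{k+1}{2}$ consecutive pairs, each contributing $1 + 5 = 6 \pmod 8$. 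Thus
$$\sigma(p^k) \equiv 6 \cdot \frac{k+1}{2} \pmod 8,$$
and it remains only to evaluate the right-hand side according to the residue of $k$ modulo $8$.

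The one point demanding care — and the step I expect to be the main (if modest) obstacle — is that $6$ is a zero divisor modulo $8$, so the final reduction cannot be carried out by inverting and must proceed by direct substitution. When $k \equiv 1 \pmod 8$ one has $\tfrac{k+1}{2} \equiv 1 \pmod 4$, giving $6 \cdot 1 \equiv 6 \pmod 8$, which is part (2); when $k \equiv 5 \pmod 8$ one has $\tfrac{k+1}{2} \equiv 3 \pmod 4$, giving $6 \cdot 3 = 18 \equiv 2 \pmod 8$, which is part (3). To justify that only $\tfrac{k+1}{2} \bmod 4$ matters, I would note that $6 \cdot 4 \equiv 0 \pmod 8$, so the map $j \mapsto 6j \bmod 8$ has period $4$ in $j$; this observation closes both remaining cases and completes the lemma.
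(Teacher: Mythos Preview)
Your proof is correct and follows essentially the same approach as the paper: both reduce $\sigma(p^k)=\sum_{i=0}^{k}p^i$ modulo $8$ by replacing $p$ with its residue $1$ or $5$. Your treatment of the case $p\equiv 5\pmod 8$ is in fact more explicit than the paper's, which simply asserts the value of $\sum_{i=0}^{k}5^i\bmod 8$ without displaying the pairing or the evaluation of $6\cdot\frac{k+1}{2}$.
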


\begin{proof}
Let $n = p^k m^2$ be an odd perfect number with special prime $p$.  It follows that $p \equiv 1 \pmod 4$.

We consider two cases:

{\bf Case 1}: $p \equiv 1 \pmod 8$
We obtain
$$\sigma(p^k) = \sum_{i=0}^{k}{p^i} \equiv 1 + \sum_{i=1}^{k}{p^i} \equiv 1 + \sum_{i=1}^{k}{1^i} \equiv k + 1 \pmod 8,$$
as desired.

{\bf Case 2}: $p \equiv 5 \pmod 8$
We get
$$\sigma(p^k) = \sum_{i=0}^{k}{p^i} \equiv \sum_{i=0}^{k}{5^i} \equiv 
\begin{cases}
6 \pmod 8, \text{ if } k \equiv 1 \pmod 8 \\
2 \pmod 8, \text{ if } k \equiv 5 \pmod 8 \\
\end{cases}
$$
\end{proof}

\begin{lemma}\label{Lemma2}
Suppose that $n = p^k m^2$ is an odd perfect number with special prime $p$.
\begin{enumerate}
\item{If $p \equiv 1 \pmod 8$ and $k \equiv 1 \pmod 8$, then $D(p^k) \equiv 0 \pmod 8$.}
\item{If $p \equiv 1 \pmod 8$ and $k \equiv 5 \pmod 8$, then $D(p^k) \equiv 4 \pmod 8$.}
\item{If $p \equiv 5 \pmod 8$ and $k \equiv 1 \pmod 8$, then $D(p^k) \equiv 4 \pmod 8$.}
\item{If $p \equiv 5 \pmod 8$ and $k \equiv 5 \pmod 8$, then $D(p^k) \equiv 0 \pmod 8$.}
\end{enumerate}
\end{lemma}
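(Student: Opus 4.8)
The plan is to reduce everything to the defining identity $D(p^k) = 2p^k - \sigma(p^k)$ and then exploit Lemma~\ref{Lemma1}, which already pins down $\sigma(p^k) \pmod 8$ in each of the relevant cases. The only genuinely new ingredient needed is the residue of $2p^k$ modulo $8$, and I expect this quantity to be constant across all four cases, so that the four claims collapse to a single computation fed by four different values of $\sigma(p^k)$.

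First I would establish that $2p^k \equiv 2 \pmod 8$ for every special prime $p$. Since $p \equiv 1 \pmod 4$, we have either $p \equiv 1 \pmod 8$ or $p \equiv 5 \pmod 8$. Because $k \equiv 1 \pmod 4$, the exponent $k$ is odd; so from $1^k \equiv 1 \pmod 8$ and $5^2 \equiv 1 \pmod 8$ (whence $5^k \equiv 5 \pmod 8$ for odd $k$) I conclude $p^k \equiv p \pmod 8$, hence $p^k \equiv 1$ or $5 \pmod 8$. In either case $2p^k \equiv 2 \pmod 8$, since $2\cdot 1 = 2$ and $2 \cdot 5 = 10 \equiv 2 \pmod 8$.

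With this in hand the identity becomes $D(p^k) \equiv 2 - \sigma(p^k) \pmod 8$, and the four assertions follow by substituting the values of $\sigma(p^k)$ supplied by Lemma~\ref{Lemma1}. Concretely, when $p \equiv 1 \pmod 8$ part (1) of that lemma gives $\sigma(p^k) \equiv k+1 \pmod 8$, so $k \equiv 1$ yields $D(p^k) \equiv 2 - 2 \equiv 0$ and $k \equiv 5$ yields $D(p^k) \equiv 2 - 6 \equiv 4 \pmod 8$; when $p \equiv 5 \pmod 8$ parts (2) and (3) give $\sigma(p^k) \equiv 6$ and $\sigma(p^k) \equiv 2$, producing $D(p^k) \equiv 4$ and $D(p^k) \equiv 0 \pmod 8$ respectively.

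Since the arithmetic is entirely mechanical once $2p^k \equiv 2 \pmod 8$ is fixed, there is no substantial obstacle. The only points demanding care are the sign handling when reducing $2 - 6 = -4$ to $4 \pmod 8$, and the (minor) verification that $k$ is odd, which is what licenses the reduction $p^k \equiv p \pmod 8$. Everything else is a direct appeal to Lemma~\ref{Lemma1}.
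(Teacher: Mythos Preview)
Your proof is correct and follows exactly the route the paper takes: the paper's own proof simply says the result follows directly from Lemma~\ref{Lemma1} via the formula $D(p^k)=2p^k-\sigma(p^k)$, and you have spelled out precisely that computation, including the auxiliary observation $2p^k\equiv 2\pmod 8$.
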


\begin{proof}
The proof is trivial and follows directly from Lemma \ref{Lemma1}, using the formula $D(p^k)=2p^k - \sigma(p^k)$.
\end{proof}

\begin{lemma}\label{Lemma3}
Suppose that $n = p^k m^2$ is an odd perfect number with special prime $p$.
\begin{enumerate}
\item{If $p \equiv 1 \pmod 8$ and $k \equiv 1 \pmod 8$, then $s(p^k) \equiv 1 \pmod 8$.}
\item{If $p \equiv 1 \pmod 8$ and $k \equiv 5 \pmod 8$, then $s(p^k) \equiv 5 \pmod 8$.}
\item{If $p \equiv 5 \pmod 8$ and $k \equiv 1 \pmod 8$, then $s(p^k) \equiv 1 \pmod 8$.}
\item{If $p \equiv 5 \pmod 8$ and $k \equiv 5 \pmod 8$, then $s(p^k) \equiv 5 \pmod 8$.}
\end{enumerate}
\end{lemma}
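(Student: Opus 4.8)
The plan is to combine the definition $s(p^k) = \sigma(p^k) - p^k$ with the residues for $\sigma(p^k)$ already established in Lemma \ref{Lemma1}, reducing everything to a short computation of $p^k \pmod 8$. This mirrors the proof of Lemma \ref{Lemma2}, which likewise followed directly from Lemma \ref{Lemma1}.

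First I would record $p^k \pmod 8$ in each case. When $p \equiv 1 \pmod 8$ we immediately get $p^k \equiv 1^k \equiv 1 \pmod 8$. When $p \equiv 5 \pmod 8$ we use $5^2 = 25 \equiv 1 \pmod 8$, so that $p^k \equiv 5^k$ depends only on the parity of $k$; since both $k \equiv 1$ and $k \equiv 5 \pmod 8$ force $k$ to be odd, we obtain $p^k \equiv 5 \pmod 8$ in both remaining cases.

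Next I would substitute the relevant $\sigma(p^k)$ residue from Lemma \ref{Lemma1} and subtract. For the two cases with $p \equiv 1 \pmod 8$ we have $\sigma(p^k) \equiv k+1 \pmod 8$, giving $s(p^k) \equiv (k+1) - 1 \equiv k \pmod 8$, which is $1$ when $k \equiv 1$ and $5$ when $k \equiv 5$. For $p \equiv 5 \pmod 8$ with $k \equiv 1$ we get $s(p^k) \equiv 6 - 5 \equiv 1 \pmod 8$, and for $p \equiv 5 \pmod 8$ with $k \equiv 5$ we get $s(p^k) \equiv 2 - 5 \equiv -3 \equiv 5 \pmod 8$. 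I do not anticipate any genuine obstacle: as with Lemma \ref{Lemma2}, the statement is essentially a bookkeeping consequence of Lemma \ref{Lemma1}, and the only point needing a moment's care is that the hypotheses on $k$ always make $k$ odd, so that $5^k \equiv 5 \pmod 8$ rather than $1$.
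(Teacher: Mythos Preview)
Your proof is correct and essentially matches the paper's approach: both reduce the claim to a direct modular computation, with the only cosmetic difference that you invoke $s(p^k)=\sigma(p^k)-p^k$ together with Lemma~\ref{Lemma1}, whereas the paper cites Lemma~\ref{Lemma2} and the equivalent identity $s(p^k)=p^k-D(p^k)$.
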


\begin{proof}
The proof is trivial and follows directly from Lemma \ref{Lemma2}, using the formula $s(p^k) = p^k - D(p^k)$.
\end{proof}

\begin{lemma}\label{Lemma4}
Suppose that $n = p^k m^2$ is an odd perfect number with special prime $p$.
\begin{enumerate}
\item{If $\sigma(m^2) \equiv 1 \pmod 4$, then $D(m^2) \equiv 1 \pmod 4$.}
\item{If $\sigma(m^2) \equiv 3 \pmod 4$, then $D(m^2) \equiv 3 \pmod 4$.}
\end{enumerate}
\end{lemma}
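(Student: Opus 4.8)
The plan is to work directly from the definition $D(m^2) = 2m^2 - \sigma(m^2)$ and reduce everything modulo $4$. The entire argument hinges on pinning down the residue of $2m^2 \pmod 4$, after which both cases drop out by a single subtraction. This lemma is the $m^2$-side counterpart of Lemma \ref{Lemma3}, and it is precisely what will later let me compare $D(m^2)$ against $D(p^k)$ and $s(p^k)$ when analyzing Equation $(*)$.

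First I would observe that, since $n = p^k m^2$ is odd, the factor $m$ must itself be odd. Hence $m^2$ is an odd square, and every odd square satisfies $m^2 \equiv 1 \pmod 8$, so in particular $m^2 \equiv 1 \pmod 4$. Multiplying by $2$ gives $2m^2 \equiv 2 \pmod 4$.

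Substituting this into the defining identity yields
$$D(m^2) = 2m^2 - \sigma(m^2) \equiv 2 - \sigma(m^2) \pmod 4.$$
From here both claims are immediate. If $\sigma(m^2) \equiv 1 \pmod 4$, then $D(m^2) \equiv 2 - 1 \equiv 1 \pmod 4$, establishing part (1); and if $\sigma(m^2) \equiv 3 \pmod 4$, then $D(m^2) \equiv 2 - 3 \equiv -1 \equiv 3 \pmod 4$, establishing part (2).

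I do not expect any genuine obstacle here: the computation is a one-line reduction. The only step requiring a moment of justification is the claim that $m$ is odd (which forces $2m^2 \equiv 2 \pmod 4$), and this follows at once from the oddness of the perfect number $n$ together with $\gcd(p,m)=1$. Everything else is mechanical substitution into $D(m^2) \equiv 2 - \sigma(m^2) \pmod 4$.
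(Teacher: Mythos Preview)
Your proof is correct and follows exactly the same approach as the paper: use that $m$ is odd so $m^2 \equiv 1 \pmod 4$, plug into $D(m^2)=2m^2-\sigma(m^2)$, and read off the two cases. The paper's own proof is just a one-sentence pointer to this computation, so your write-up is actually more detailed than the original.
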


\begin{proof}
The proof is trivial and follows directly from the fact that $m^2 \equiv 1 \pmod 4$ (since $m$ is odd), using the underlying assumptions and the formula $D(m^2)=2m^2 - \sigma(m^2)$.
\end{proof}

\begin{lemma}\label{Lemma5}
Suppose that $n = p^k m^2$ is an odd perfect number with special prime $p$.
\begin{enumerate}
\item{If $\sigma(m^2) \equiv 1 \pmod 4$, then $s(m^2) \equiv 0 \pmod 4$.}
\item{If $\sigma(m^2) \equiv 3 \pmod 4$, then $s(m^2) \equiv 2 \pmod 4$.}
\end{enumerate}
\end{lemma}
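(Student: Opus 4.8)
The plan is to mirror the structure of the proof of Lemma \ref{Lemma4}, swapping the deficiency identity $D(m^2) = 2m^2 - \sigma(m^2)$ for the aliquot-sum identity $s(m^2) = \sigma(m^2) - m^2$. The entire argument rests on a single structural fact: since $n = p^k m^2$ is an odd perfect number with $\gcd(p,m)=1$, the cofactor $m$ is odd, and hence $m^2 \equiv 1 \pmod 4$. (In fact $m^2 \equiv 1 \pmod 8$, but only the weaker congruence is needed here.) First I would record this observation explicitly, as it is what drives both parts of the statement.

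Next I would dispose of the two cases by direct reduction modulo $4$. For part (1), under the hypothesis $\sigma(m^2) \equiv 1 \pmod 4$, substituting into $s(m^2) = \sigma(m^2) - m^2$ and using $m^2 \equiv 1 \pmod 4$ yields
$$s(m^2) = \sigma(m^2) - m^2 \equiv 1 - 1 \equiv 0 \pmod 4.$$
For part (2), under $\sigma(m^2) \equiv 3 \pmod 4$, the identical substitution gives
$$s(m^2) = \sigma(m^2) - m^2 \equiv 3 - 1 \equiv 2 \pmod 4,$$
which is the claimed congruence.

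I expect no genuine obstacle here; the statement is of the same ``trivial'' flavor as Lemmas \ref{Lemma2}--\ref{Lemma4}, each of which follows immediately from one identity combined with an established congruence. The only point requiring any care is the justification that $m^2 \equiv 1 \pmod 4$ under the stated hypotheses, and this is immediate from the oddness of $m$. Once that is in place, each part reduces to a one-line computation, so the proof is entirely parallel to that of Lemma \ref{Lemma4}.
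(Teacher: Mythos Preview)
Your argument is correct. The only difference from the paper is cosmetic: the paper routes through Lemma~\ref{Lemma4} using the identity $s(m^2) = m^2 - D(m^2)$ (so that $s(m^2) \equiv 1 - D(m^2) \pmod 4$), whereas you bypass Lemma~\ref{Lemma4} entirely and apply $s(m^2) = \sigma(m^2) - m^2$ directly to the hypothesis. Both reductions rest on the same fact $m^2 \equiv 1 \pmod 4$ and are equally immediate; your version is marginally more self-contained since it does not invoke the preceding lemma.
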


\begin{proof}
The proof is trivial and follows directly from Lemma \ref{Lemma4}, using the formula $s(m^2) = m^2 - D(m^2)$.
\end{proof}

We are now ready to prove our main result.

\begin{theorem}\label{MainTheorem}
Suppose that $n = p^k m^2$ is an odd perfect number with special prime $p$.
\begin{enumerate}
\item{If $p \equiv k \equiv 1 \pmod 8$, then $\sigma(m^2) \equiv 3 \pmod 4$ is impossible.}
\item{If $p \equiv 1 \pmod 8$ and $k \equiv 5 \pmod 8$, then $\sigma(m^2) \equiv 1 \pmod 4$ is impossible.}
\item{If $p \equiv 5 \pmod 8$ and $k \equiv 1 \pmod 8$, then $\sigma(m^2) \equiv 1 \pmod 4$ is impossible.}
\item{If $p \equiv k \equiv 5 \pmod 8$, then $\sigma(m^2) \equiv 3 \pmod 4$ is impossible.}
\end{enumerate}
\end{theorem}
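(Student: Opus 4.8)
The plan is to extract a single numerical contradiction from Equation $(*)$ by comparing the exact power of $2$ dividing each side. Write $G = \gcd(m^2, \sigma(m^2))$ and let $v_2(\cdot)$ denote the $2$-adic valuation. Since $m$ is odd, both $m^2$ and $\sigma(m^2)$ are odd, so $G$ is odd and $G^2$ is an odd square; in particular $v_2(G^2) = 0$. Rewriting $(*)$ as $2\,D(m^2)\,s(m^2) = G^2\,D(p^k)\,s(p^k)$ and applying $v_2$ to both sides therefore yields the single constraint
$$1 + v_2(D(m^2)) + v_2(s(m^2)) = v_2(D(p^k)) + v_2(s(p^k)).$$

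Next I would read off each valuation from the preceding lemmas. Lemma~\ref{Lemma3} shows $s(p^k) \equiv 1$ or $5 \pmod 8$ in every case, so $s(p^k)$ is odd and $v_2(s(p^k)) = 0$; the constraint collapses to $1 + v_2(D(m^2)) + v_2(s(m^2)) = v_2(D(p^k))$. By Lemma~\ref{Lemma4} the quantity $D(m^2)$ is always odd, so $v_2(D(m^2)) = 0$, leaving
$$v_2(D(p^k)) = 1 + v_2(s(m^2)).$$
Now the two hypotheses on $\sigma(m^2)$ modulo $4$ pin down the right-hand side via Lemma~\ref{Lemma5}: if $\sigma(m^2) \equiv 1 \pmod 4$ then $s(m^2) \equiv 0 \pmod 4$, so $v_2(s(m^2)) \geq 2$ and the right-hand side is $\geq 3$; if $\sigma(m^2) \equiv 3 \pmod 4$ then $s(m^2) \equiv 2 \pmod 4$, so $v_2(s(m^2)) = 1$ and the right-hand side equals exactly $2$. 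Meanwhile Lemma~\ref{Lemma2} pins down the left-hand side: when $p \equiv k \pmod 8$ we have $D(p^k) \equiv 0 \pmod 8$ and $v_2(D(p^k)) \geq 3$, while when $p \not\equiv k \pmod 8$ we have $D(p^k) \equiv 4 \pmod 8$ and $v_2(D(p^k)) = 2$.

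Matching these up settles all four items simultaneously. In items (1) and (4), where $p \equiv k \pmod 8$, the left-hand side is $\geq 3$, which is incompatible with the value $2$ forced by $\sigma(m^2) \equiv 3 \pmod 4$, so that residue is impossible. In items (2) and (3), where $p \not\equiv k \pmod 8$, the left-hand side is exactly $2$, which is incompatible with the value $\geq 3$ forced by $\sigma(m^2) \equiv 1 \pmod 4$, so that residue is impossible.

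I expect the only genuine subtlety to be the bookkeeping with inequalities rather than exact equalities: the congruences $D(p^k) \equiv 0 \pmod 8$ and $s(m^2) \equiv 0 \pmod 4$ give only the lower bounds $v_2(D(p^k)) \geq 3$ and $v_2(s(m^2)) \geq 2$, so one must check that in each case the contradiction compares an exact value against a bound (for instance, an exact $2$ against a quantity that is $\geq 3$) and hence still holds; it does in all four items. The observation that $G^2$ is an odd square, and so drops out of the valuation count entirely, is what makes Equation $(*)$ ``untenable'' in precisely the two residue classes claimed.
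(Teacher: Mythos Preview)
Your proof is correct and follows essentially the same approach as the paper: both arguments rewrite Equation~$(*)$ as $2\,D(m^2)\,s(m^2) = G^2\,D(p^k)\,s(p^k)$, invoke Lemmas~\ref{Lemma2}--\ref{Lemma5} for the congruence classes of the four factors, and obtain a contradiction by comparing the power of $2$ on each side. The paper carries this out by writing four separate symbolic Diophantine equations and asserting each has no integer solutions, while you package the same parity count uniformly via the $2$-adic valuation identity $v_2(D(p^k)) = 1 + v_2(s(m^2))$; this is a cleaner presentation of the identical idea rather than a different route.
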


\begin{proof}
Let $n = p^k m^2$ be an odd perfect number with special prime $p$.

Notice that the right-hand side of Equation $(*)$
$$\frac{2D(m^2)s(m^2)}{D(p^k)s(p^k)} = \bigg(\gcd(m^2, \sigma(m^2))\bigg)^2. \hspace{1.0in} (*)$$ 
is odd.  (Furthermore, it is congruent to $1$ modulo $8$.)

First, suppose that $p \equiv k \equiv 1 \pmod 8$, and assume to the contrary that $\sigma(m^2) \equiv 3 \pmod 4$ holds.  By Lemma \ref{Lemma2}, $D(p^k) \equiv 0 \pmod 8$.  By Lemma \ref{Lemma4}, $D(m^2) \equiv 3 \pmod 4$.  By Lemma \ref{Lemma3}, $s(p^k) \equiv 1 \pmod 8$.  By Lemma \ref{Lemma5}, $s(m^2) \equiv 2 \pmod 4$.  Thus, from Equation $(*)$ we obtain (symbolically)
$$2(4a_1 + 3)(4b_1 + 2) = (8x_1 + 1)(8c_1)(8d_1 + 1)$$
which does not have any integer solutions.

Next, suppose that $p \equiv 1 \pmod 8$ and $k \equiv 5 \pmod 8$, and assume to the contrary that $\sigma(m^2) \equiv 1 \pmod 4$ holds.  By Lemma \ref{Lemma2}, $D(p^k) \equiv 4 \pmod 8$.  By Lemma \ref{Lemma4}, $D(m^2) \equiv 1 \pmod 4$.  By Lemma \ref{Lemma3}, $s(p^k) \equiv 5 \pmod 8$.  By Lemma \ref{Lemma5}, $s(m^2) \equiv 0 \pmod 4$.  Thus, from Equation $(*)$ we obtain (symbolically)
$$2(4a_2 + 1)(4b_2) = (8x_2 + 1)(8c_2 + 4)(8d_2 + 5)$$
which does not have any integer solutions.

Now, suppose that $p \equiv 5 \pmod 8$ and $k \equiv 1 \pmod 8$, and assume to the contrary that $\sigma(m^2) \equiv 1 \pmod 4$ holds.  By Lemma \ref{Lemma2}, $D(p^k) \equiv 4 \pmod 8$.  By Lemma \ref{Lemma4}, $D(m^2) \equiv 1 \pmod 4$.  By Lemma \ref{Lemma3}, $s(p^k) \equiv 1 \pmod 8$.  By Lemma \ref{Lemma5}, $s(m^2) \equiv 0 \pmod 4$.  Thus, from Equation $(*)$ we obtain (symbolically)
$$2(4a_3 + 1)(4b_3) = (8x_3 + 1)(8c_3 + 4)(8d_3 + 1)$$
which does not have any integer solutions.

Finally, suppose that $p \equiv k \equiv 5 \pmod 8$, and assume to the contrary that $\sigma(m^2) \equiv 3 \pmod 4$ holds.  By Lemma \ref{Lemma2}, $D(p^k) \equiv 0 \pmod 8$.  By Lemma \ref{Lemma4}, $D(m^2) \equiv 3 \pmod 4$.  By Lemma \ref{Lemma3}, $s(p^k) \equiv 5 \pmod 8$.  By Lemma \ref{Lemma5}, $s(m^2) \equiv 2 \pmod 4$.  Thus, from Equation $(*)$ we obtain (symbolically)
$$2(4a_4 + 3)(4b_4 + 2) = (8x_4 + 1)(8c_4)(8d_4 + 5)$$
which does not have any integer solutions.

This concludes the proof.
\end{proof}

\begin{remark}\label{MainThmRemark}
To summarize, Theorem \ref{MainTheorem} just states that if $n = p^k m^2$ is an odd perfect number with special prime $p$, then $\sigma(m^2) \equiv 1 \pmod 4$ holds if and only if $p \equiv k \pmod 8$.  Our argument provides an alternative proof for Theorem 3.3, equation 3.1 in \cite{ChenLuo} (as reproduced above in Theorem \ref{ChenLuo}). 
\end{remark}

\section{An Application}

Let $n = p^k m^2$ be an odd perfect number with special prime $p$, and let $\sigma(m^2)/p^k$ be a square.  Since $\sigma(m^2)/p^k$ is odd, it follows that $\sigma(m^2)/p^k \equiv 1 \pmod 4$.  But it is known that $p \equiv k \equiv 1 \pmod 4$.  In particular, we know that $p^k \equiv 1 \pmod 4$.  This implies that $\sigma(m^2) \equiv 1 \pmod 4$, if $\sigma(m^2)/p^k$ is a square.  By Theorem \ref{MainTheorem}, we know that $p \equiv k \pmod 8$.

Moreover, Broughan, Delbourgo, and Zhou proved in \cite{BroughanDelbourgoZhou} (Lemma $8$, page $7$) that if $\sigma(m^2)/p^k$ is a square, then $k=1$ holds.

Thus, under the assumption that $\sigma(m^2)/p^k$ is a square, we have
$$p \equiv k = 1 \pmod 8.$$
This implies that the lowest possible value for the special prime $p$ is $17$.

We state this result as our next theorem.

\begin{theorem}\label{Application}
Suppose that $n = p^k m^2$ is an odd perfect number with special prime $p$.  If $\sigma(m^2)/p^k$ is a square, then $p \geq 17$.
\end{theorem}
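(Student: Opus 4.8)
The plan is to translate the desired bound $p \geq 17$ into a single congruence condition on $p$ and then read off the least prime representative of the relevant residue class. The argument is short and modular, stringing together the main theorem of this paper with one external input.

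First I would extract information from the squareness hypothesis. The quotient $\sigma(m^2)/p^k$ is a ratio of odd integers, hence odd; being also a perfect square, it is congruent to $1 \pmod 4$ (in fact every odd square is $\equiv 1 \pmod 8$, though $\pmod 4$ is all that is needed here). Since Euler's structure theorem gives $p \equiv k \equiv 1 \pmod 4$ and therefore $p^k \equiv 1 \pmod 4$, factoring $\sigma(m^2) = p^k \cdot \bigl(\sigma(m^2)/p^k\bigr)$ forces $\sigma(m^2) \equiv 1 \pmod 4$.

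Next I would apply Theorem \ref{MainTheorem} --- equivalently, the biconditional recorded in Remark \ref{MainThmRemark} --- in the direction $\sigma(m^2) \equiv 1 \pmod 4 \implies p \equiv k \pmod 8$. This narrows $p$ to the single residue class modulo $8$ that matches $k$, but it does not yet isolate the class $1$, so one further ingredient is required to control the exponent $k$.

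The decisive step, and the only place I rely on outside machinery, is the result of Broughan, Delbourgo, and Zhou \cite{BroughanDelbourgoZhou} asserting that $\sigma(m^2)/p^k$ being a square forces $k = 1$. Substituting $k = 1$ into $p \equiv k \pmod 8$ yields $p \equiv 1 \pmod 8$. The proof then closes by inspection: the positive integers in the class $1 \pmod 8$ are $1, 9, 17, 25, \dots$, and the smallest of these that is prime is $17$, whence $p \geq 17$. I anticipate no genuine difficulty in the self-contained portions of the argument; the main obstacle is simply the reliance on the cited Broughan--Delbourgo--Zhou theorem, without which one could conclude only $p \equiv k \pmod 8$ rather than the sharper $p \equiv 1 \pmod 8$ that pins down the bound.
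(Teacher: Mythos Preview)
Your argument is correct and mirrors the paper's proof step for step: odd square $\Rightarrow \sigma(m^2)/p^k \equiv 1 \pmod 4$, hence $\sigma(m^2) \equiv 1 \pmod 4$, hence $p \equiv k \pmod 8$ by the main theorem, then $k=1$ via Broughan--Delbourgo--Zhou, so $p \equiv 1 \pmod 8$ and $p \geq 17$. The only cosmetic point is that ``ratio of odd integers, hence odd'' tacitly uses that $p^k \mid \sigma(m^2)$, which the paper establishes in its preliminaries.
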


\begin{remark}\label{OchemRemark}
Let $n = p^k m^2$ be an odd perfect number with special prime $p$.

Note that if
$$\frac{\sigma(m^2)}{p^k}=\frac{m^2}{\sigma(p^k)/2}$$
is a square, then $k=1$ and $\sigma(p^k)/2 = (p+1)/2$ is also a square.

The possible values for the special prime satisfying $p < 100$ and $p \equiv 1 \pmod 8$ are $17$, $41$, $73$, $89$, and $97$.

For each of these values:
$$\frac{p_1 + 1}{2} = \frac{17 + 1}{2} = 9 = 3^2.$$
$$\frac{p_2 + 1}{2} = \frac{41 + 1}{2} = 21 \text{ which is not a square.}$$
$$\frac{p_3 + 1}{2} = \frac{73 + 1}{2} = 37 \text{ which is not a square.}$$
$$\frac{p_4 + 1}{2} = \frac{89 + 1}{2} = 45 \text{ which is not a square.}$$
$$\frac{p_5 + 1}{2} = \frac{97 + 1}{2} = 49 = 7^2.$$

A quick way to rule out $41$, $73$ and $89$, as remarked by Ochem \cite{Ochem} over at Mathematics StackExchange, is as follows:  ``If $(p+1)/2$ is an odd square, then $(p+1)/2 \equiv 1 \pmod 8$, so that $p \equiv 1 \pmod {16}$. This rules out $41$, $73$, and $89$."
\end{remark}

\section{Conclusion}

Additional tools are required if we are to push the analysis from $\sigma(m^2)$ modulo $4$ to consider $\sigma(m^2)$ modulo $8$.  The authors have tried to check Equation $(*)$ by considering $m^2 \equiv 1 \pmod 8$, and the various corresponding cases for $\sigma(m^2)$ modulo $8$ (which are determined by Theorem \ref{MainTheorem}), but so far all their attempts have not resulted in any contradictions.

\section*{Acknowledgements} 

The authors are indebted to the anonymous referees whose valuable feedback improved the overall presentation and style of this manuscript.

\makeatletter
\renewcommand{\@biblabel}[1]{[#1]\hfill}
\makeatother

\end{document}